\newtheorem{theorem}{Theorem}
\newtheorem{example}[theorem]{Example}
\newtheorem{remark}[theorem]{Remark}
\newcommand{\kboolean}[1]{X^{(#1)}} %(n,k)-th Boolean product polynomial
\newcommand{\ssyt}[1]{\mathrm{SSYT}(#1)} % set of semistandard Young tableaux
\newcommand{\cont}[1]{\mathrm{cont}(#1)} % content
\newcommand{\schurfunc}[1]{\mathbb{S}^{#1}} % Schur functor
\newcommand{\Exterior}{\mathchoice{{\textstyle\bigwedge}}%
    {{\bigwedge}}%
    {{\textstyle\wedge}}%
    {{\scriptstyle\wedge}}}
\title{Boolean product polynomials and Schur-positivity}
\author{Louis
  J.~Billera\thanks{\href{mailto:billera@math.cornell.edu}{billera@math.cornell.edu}. Supported
    in part by the Simons Foundation.}\addressmark{1},  Sara
  C.~Billey\thanks{\href{mailto:billey@math.washington.edu}{billey@math.washington.edu}. Partially
    supported by grant DMS-1101017 from the NSF.}\addressmark{2} \and Vasu Tewari\addressmark{2}}
\address{\addressmark{1}Department of Mathematics, Cornell University, Ithaca, NY 14853 \\ \addressmark{2}Department of Mathematics, University of Washington, Seattle, WA 98195}
\abstract{
We study a family of symmetric polynomials that we refer to as the Boolean product polynomials.
The motivation for studying these polynomials stems from the computation of the characteristic polynomial of the real matroid spanned by the nonzero vectors in $\mathbb{R}^n$ all of whose coordinates are either $0$ or $1$. To this end, one approach is to compute the zeros of the Boolean product polynomials over finite fields.
The zero loci of these polynomials cut out  hyperplane arrangements known as resonance arrangements, which show up in the context of double Hurwitz polynomials.
By relating the Boolean product polynomials to certain total Chern classes
of vector bundles, we establish their Schur-positivity by appealing to
a result of Pragacz relying on earlier work on numerical positivity by Fulton-Lazarsfeld. Subsequently, we study a two-alphabet version of these polynomials from the viewpoint of Schur-positivity. As a special case of these polynomials, we recover  symmetric functions first studied by D\'esarm\'enien and Wachs in the context of descents in derangements.
}
\keywords{Resonance hyperplane arrangement, minimal balanced
  collections, vector bundle methods, Schur-positivity, symmetric functions}
\begin{document}

\maketitle
%% note that you DO NOT have to put your abstract here -- it is generated by \maketitle and the \abstract and \resume commands above

\section{Introduction}
Consider the polynomial ring $\mathbb{C}[x_1,\ldots,x_n]$ in $n$ commuting indeterminates $x_1,\ldots, x_n$.
Let $X=\{x_1,\ldots,x_n\}$ and $[n]=\{1,\ldots,n\}$.
For a nonempty subset $S$ of $[n]$, define the linear form $X_S$ in
$\mathbb{C}[x_1,\ldots,x_n]$ by the sum 
\[
X_S\coloneqq\sum_{i\in S}x_i.
\]
For integers $k,n$ satisfying $1\leq k\leq n$, define homogeneous
polynomials $B_{n,k}(X)$ of degree $\binom{n}{k}$ and $B_{n}(X)$ of
degree $2^n - 1$ by the products 
\begin{equation}\label{eq:b.nk}
  B_{n,k}(X)\coloneqq\prod_{S\subseteq [n], |S|=k} X_S \hspace{.1in}
  \text{ and } \hspace{.1in} 
  B_n(X) \coloneqq\prod_{k=1}^n B_{n,k}(X).
\end{equation}
% Similarly, the homogeneous
% polynomial $B_{n}(X)$ of degree $2^n - 1$ is defined by 
% \begin{equation}\label{eq:b.n}
%   B_n(X) \coloneqq\prod_{k=1}^n B_{n,k}(X).
% \end{equation}
For brevity, let $B_{n,k}\coloneqq B_{n,k}(X)$ and
$B_n\coloneqq B_n(X)$ when the alphabet is understood.  We refer to
$B_{n,k}$ as the \emph{$(n,k)$-th Boolean product polynomial}
and to $B_n$ as the \emph{$n$-th total Boolean product polynomial}.

Observe that the polynomials $B_{n}(X)$ and $B_{n,k}(X)$ are symmetric
under permutations of the variables.  In fact, one might consider the
Boolean product polynomials to be among the most ``natural symmetric
polynomials'' along with elementary, homogeneous, and monomial
symmetric polynomials, and Stanley's chromatic symmetric polynomials.
% They have appeared appeared in the work of \cite{}.  
The main focus of this article is the Schur-positivity of all Boolean product polynomials.

\begin{theorem}\label{thm:Billey-Billera} For any  positive integers
  $k\leq n$, the $(n,k)$-th Boolean product polynomial $B_{n,k}(X)$ is
  Schur-positive. That is, there exist nonnegative integers
  $\kappa_{\lambda}^{(n,k)}$ such that
 \[
B_{n,k}(X)=\sum_{\lambda} \kappa_{\lambda}^{(n,k)}s_{\lambda}(X).
\]
Furthermore, the $n$-th total Boolean product polynomial is
  Schur-positive.
\end{theorem}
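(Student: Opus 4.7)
The plan is to interpret $B_{n,k}(X)$ as the top Chern class of an exterior power bundle, and then invoke Pragacz's Schur-positivity theorem for Chern classes of Schur functors.

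First, I would invoke the splitting principle and treat $x_1,\ldots,x_n$ as the Chern roots of a formal rank-$n$ vector bundle $E$. The Chern roots of the exterior power $\wedge^k E$ are then exactly the $\binom{n}{k}$ linear forms $X_S$ with $|S|=k$, so that the top Chern class becomes
\[
c_{\binom{n}{k}}(\wedge^k E) \;=\; \prod_{|S|=k} X_S \;=\; B_{n,k}(X).
\]
Thus $B_{n,k}$ is identified with a Chern class of a Schur functor applied to $E$.

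The key step is to apply a theorem of Pragacz, built upon the Fulton--Lazarsfeld numerical positivity results for nef/ample vector bundles, asserting that for every $j$ the Chern class $c_j(\wedge^k E)$, expanded as a symmetric polynomial in $x_1,\ldots,x_n$, has non-negative coefficients in the Schur basis. Specializing to $j=\binom{n}{k}$ immediately yields the Schur-positivity of $B_{n,k}(X)$. For the total Boolean product polynomial $B_n(X)=\prod_{k=1}^n B_{n,k}(X)$, positivity follows at once from the fact that products of Schur-positive symmetric functions are Schur-positive, since the Littlewood--Richardson coefficients governing the expansion of $s_\lambda s_\mu$ are non-negative.

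The principal obstacle is the invocation of Pragacz's theorem itself: although the statement of Theorem~\ref{thm:Billey-Billera} is purely combinatorial, this route brings in genuine algebro-geometric input (positivity cones, nef bundles, and global generation arguments) and is in that sense non-elementary. A secondary but conceptually distinct issue is that the approach produces only the existence of non-negative coefficients $\kappa_\lambda^{(n,k)}$ and offers essentially no combinatorial handle on them; extracting explicit formulas or bijective interpretations for the Schur expansion would require separate techniques, and I would flag this as the natural follow-up problem.
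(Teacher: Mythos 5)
Your proposal is correct and follows essentially the same route as the paper: identify $B_{n,k}(X)$ with the top Chern class $c_{\binom{n}{k}}(\Exterior^{k}\mathcal{E})=s_{(1^{\binom{n}{k}})}$ evaluated at the Chern roots of $\Exterior^{k}\mathcal{E}$, invoke Pragacz's theorem (resting on Fulton--Lazarsfeld) for Schur-positivity, and handle $B_n(X)$ via the Littlewood--Richardson rule. The paper phrases the key step as the specialization $\lambda=(1^p)$, $\mu=(1^k)$ of Pragacz's expansion, which is exactly your statement that every $c_j(\Exterior^{k}\mathcal{E})$ is Schur-positive.
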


Our motivation for studying the Boolean product polynomials stems from three
long-standing open problems.  One  comes from matroid
theory/hyperplane arrangements, one comes from economics/game
theory/physics, and one comes from Hadamard's maximal determinant problem (see Section~\ref{s:remarks}).

The first interesting open problem is to find the characteristic
polynomial $\chi_{n}(t)$ for the real matroid $M_n$ spanned by 0--1
vectors in $\mathbb{R}^n$.  The same polynomial $\chi_{n}(t)$ is the
characteristic polynomial of the hyperplane arrangement corresponding
to hyperplanes given by the vanishing of $X_S$ for all nonempty
$S \subseteq [n]$.  This arrangement has also appeared in the work of
\cite{CJM11}, where it is called the \emph{resonance arrangement}. One
of the main results in \cite{CJM11} is that the regions of the
resonance arrangement are in fact the chambers of polynomiality of the
genus $g$ double Hurwitz numbers.  One approach to computing
$\chi_{n}(t)$ is the \emph{finite field method} due to Athanasiadis
\cite{Athanasiadis96}, which asserts that, for large enough primes $p$,
$\chi_{n}(p)$ is the number of points in $\mathbb{F}_p^n$ in the
complement of the resonance arrangement.  The finite field method is
also described nicely in \cite{Stanley07} and \cite[Section
3.11.4]{Stanley-EC1}.  This approach was used in \cite[Lemma
5.3]{Terao11} to compute $\chi_{n}$ for small $n$.
% One can view the
% theorem of Athanasiadis as stating that a finite union of rational
% hyperplanes is a \emph{polynomial count variety}, a rational variety
% $V$ for which the number of points
% $(x_{1},\dots,x_{n}) \in \mathbb{F}_{p}^{n} \cap V$ grows as a
% polynomial in $p$, for large $p$; see, for example
% \cite[Prop. 7.5]{Dimca17}.  Other examples include the Grassmannian
% and the flag variety \cite[\S 1.3]{Stanley-EC1}.

As a variety, the resonance  arrangement is the zero locus
of the total Boolean product polynomial $B_{n}(X)$ as defined in
\eqref{eq:b.nk}.  In addition, the zero locus of $B_{n,k}(X)$ in
$\mathbb{R}^n$ is a central hyperplane arrangement in $\mathbb{R}^n$.
Klivans and Reiner \cite{Klivans-Reiner06} study the zonotope dual to
this subarrangement of the resonance  arrangement in the
context of degree sequences of hypergraphs.
 This zonotope, called the
 polytope of degree sequences in \cite{Klivans-Reiner06} is the
 Minkowski sum of the line segments $[0,\mathbf{e}_{S}]$ where $S$
 ranges over all $k$-subsets of $[n]$,\
 $\mathbf{e}_{S}\coloneqq \sum_{i\in S}\mathbf{e}_{i}$,\ and
 $\mathbf{e}_{i}$ is the i$^{th}$ unit vector in $\mathbb{R}^{n}$.

Our second motivating problem has roots in the work of Shapley
\cite{Shapley67} in his study of \emph{economic equilibria}, \emph{i.e.}
the core of an $n$-person cooperative game.  We start by considering
collections of subsets of the finite set $[n]$ as sets of vertices of
the $n$-cube $[0,1]^{n}$.
Let $2^{[n]}$ denote the set of all subsets of $[n]$, and let
$2^{2^{[n]}}$ denote the Boolean algebra on $2^{[n]}$.  A collection
$\mathcal{C} \subseteq 2^{[n]}$ is \emph{balanced} if the convex hull of
the set $\{\mathbf{e}_{S} : S \in \mathcal{C} \}$ meets the main
diagonal in $[0,1]^{n}$, {\it i.e.}, the line between
$\mathbf{e}_{\emptyset}$ and $\mathbf{e}_{[n]}$.  A collection is
\emph{unbalanced} otherwise.  The set of unbalanced collections is an
order ideal in $2^{2^{[n]}}$, while the complementary set of balanced
collections is an order filter.  Thus, it makes sense to consider
\emph{minimal balanced} collections and \emph{maximal unbalanced}
collections.  The former were first considered by Shapley, while the
latter have arisen more recently in two independent studies by
Billera-Moore-Moraites-Wang-Williams \cite{BMMWW12} and Bj\"orner
\cite{Bjorner15}.

We are interested in the enumerative problem of counting the maximal
unbalanced collections for a given $n$.  These collections of subsets
of $[n+1]$ are in bijection with the regions in the 
resonance arrangement \cite{BMMWW12}.  In addition, the regions of the
resonance arrangement are said to count so-called ``generalized
retarded functions'' of quantum field theory \cite{Evans95}, while in
\cite{Terao11}, where the arrangement is called the \emph{all-subsets
  arrangement}, its regions are shown to correspond to certain
preference rankings of interest in psychology and economics.
%  This
% count is also related to an early question about the number of
% threshold Boolean functions \cite{Muroga71}. 
From work of Zuev \cite{Zuev89}, it is known that the number of maximal
unbalanced collections for a given $n$ is asymptotically on the order
of $2^{n^{2}}$, while specific upper and lower bounds were derived in
\cite{BMMWW12}.  One way of obtaining the number exactly is to compute
the characteristic polynomial $\chi_{n}(t)$ of the resonance
arrangement as described above and to apply the theorem of Zaslavsky
relating regions of hyperplane arrangements to the characteristic
polynomial \cite{Zaslavsky75}, see also \cite[Thm
3.11.7]{Stanley-EC1}.  See \cite[A034997]{oeis}
for additional known results on this integer sequence.

% The third motivating problem comes from real linear algebra on 0-1
% matrices.  To understand the matroid $M_{n}$, one needs to know for
% each $n \times n$ matrix $A$ of $0's$ and $1's$ whether $\det A$ is
% zero or not.  However, to ``know'' the \emph{arithmetic matroid} of
% all 0-1 vectors \cite{Moci13}, one needs to know the absolute value
% $|\det A|$ of each such matrix.  Now to know the possible determinants
% of all 0-1 matrices would include the solution of the problem of
% Hadamard's maximal determinant problem from 1893 \cite{wiki:Hadamard},
% \emph{i.e.} whether there is an $n \times n$ Hadamard matrix whenever
% $n=4k$.  The reason for this is that for each 0-1 $n \times n$ matrix
% $A$, $\det A \le (n+1)^{(n+1)/2}/2^{n}$ with equality if and only if
% there is a Hadamard matrix of order $n+1$.

%The third motivating open problem comes from Hadamard's maximal
%determinant problem from 1893 \cite{wiki:Hadamard}, which is
%equivalent to finding the largest possible determinant of any 0-1
%matrix.  The characteristic polynomial of $M_n$ would help solve this
%problem as well.  

The outline of this extended abstract is as follows.  In
Section~\ref{s:notation}, we review our notation and key theorems by
Lascoux and Pragacz.  In Section~\ref{s:main}, we prove our main
results.  As a consequence, we consider the special case of
$B_{n,n-1}(X)$ where we can give the explicit Schur expansion using
work of D\'esarm\'enien and Wachs on descent sets of derangements.  We
also generalize the Boolean product polynomials to multiple alphabets, and
give additional positivity results.  In Section~\ref{s:remarks}, we
state some additional open problems related to Boolean product
polynomials.

\smallskip

\noindent\textbf{Acknowledgments.} We thank Patricia Hersh, Steve Mitchell, and Jair
Taylor for helpful discussions.  We thank BIRS for the opportunity to
begin this collaboration at the Algebraic Combinatorics Workshop in
August 2015.

\section{Notation and Background}\label{s:notation}
Throughout this extended abstract, we fix a positive integer $n$ and
an alphabet $X=\{x_1,\ldots, x_n\}$. Denote the symmetric group on $n$
letters by $\mathfrak{S}_n$.  We refer the reader to Fulton-Harris
\cite{FultonHarris91}, Macdonald \cite{Macdonald95} or Stanley
\cite[Chapter 7]{Stanley-EC2} for a detailed treatment of the
combinatorics of symmetric polynomials and its relation to the
representation theory of both the symmetric group and the general
linear group.

\subsection{Partitions, tableaux and symmetric polynomials}
A \emph{partition} $\lambda$ of a positive integer $m$ is a finite ordered list of positive integers $(\lambda_1,\ldots,\lambda_k)$ such that $\sum_{i=1}^k\lambda_i=m$. 
We call the $\lambda_i$ the \emph{parts} of $\lambda$ and denote the number of parts by $\ell(\lambda)$. We denote the sum of the parts of $\lambda$ by $|\lambda|$.
If $\lambda$ is a partition of $m$, we denote this by $\lambda\vdash m$. 
Pictorially we depict $\lambda=(\lambda_1,\cdots,\lambda_k)\vdash m$
via its Young diagram drawn in French notation, which is a left-justified array of $m$ boxes with $\lambda_i$ boxes in row $i$ from the bottom.  
Finally, we denote the unique partition of $0$ by $\varnothing$. %\textcolor{red}{This is French notation. We don't really need any of this explicitly...}

A \emph{semistandard Young tableau} $T$ of shape $\lambda$ is a filling of the boxes of its Young diagram with positive integers such that the entries in each row increase weakly when read from left to right, whereas the entries in each column increase strictly when read from bottom to top.
For any positive integer $m$ and partition $\lambda$, we denote by $\ssyt{\lambda,m}$ the set of semistandard Young tableaux $T$ of shape $\lambda$ satisfying the condition that their entries do not exceed $m$.
A semistandard Young tableau $T$ of shape $\lambda$ with distinct
entries drawn from the set $[|\lambda|]$ is said to be \emph{standard}. 
An entry $i$ in a standard Young tableau (abbreviated SYT) $T$ is a \emph{descent} if $i+1$ belongs to a row strictly above that occupied by $i$. Otherwise, it is an \emph{ascent}.
%\begin{example}\label{ex:SYT}
%Here is a standard Young tableau with ascents in the shaded boxes.
%\[\ytableausetup{mathmode,boxsize=1em}
%\begin{ytableau}
%*(blue!40)4\\
%3 & *(blue!40)6\\
%*(blue!40)1 & 2 & 5 & *(blue!40)7
%\end{ytableau}\]
%\end{example}

The symmetric group $\mathfrak{S}_n$ acts on
$\mathbb{C}[x_1,\ldots,x_n]$ by permuting variables. The resulting
ring of invariants, denoted by $\Lambda_n$, is the well-known ring of
\emph{symmetric polynomials} in $n$ variables. It is a polynomial
algebra generated by the $e_p(X)$ for
$1\leq p\leq n$ defined by 
\begin{align}\label{eqn:definition of elementary}
e_p(X)=\sum_{1\leq j_1<\cdots <j_p\leq n} x_{j_1}\cdots x_{j_p}.
\end{align}
We refer to $e_p(X)$ as the \emph{$p$-th elementary symmetric polynomial}. Given a partition $\lambda=(\lambda_1,\ldots, \lambda_k)$, define $e_{\lambda}(X)$ multiplicatively by setting $e_{\lambda}(X)=e_{\lambda_1}(X)\cdots e_{\lambda_k}(X)$. Furthermore, set $e_{\varnothing}(X)=1$.
The ring of symmetric polynomials is a graded ring with the grading given by setting $\deg(e_p(X))=p$. The $d$-th degree graded piece, denote by $\Lambda_n^{d}$, is the $\mathbb{C}$-linear span of the $e_{\lambda}(X)$ where $\lambda\vdash d$ and $\ell(\lambda)\leq n$.
The ring of symmetric polynomials is endowed with a distinguished involution $\omega$ that maps $e_{\lambda}(X)$ to the \emph{complete homogeneous symmetric polynomial} $h_{\lambda}(X)$.

From \eqref{eqn:definition of elementary}, it is clear how to define $e_p(Y)$ for any finite alphabet $Y$. 
Note further that $e_p(Y)$ is $0$ if $p>|Y|$.
Given a positive integer $1\leq k\leq n$, define the following new alphabet 
$$\kboolean{k}\coloneqq \left\lbrace\displaystyle X_S= \sum_{i\in S}x_i :  S\subseteq [n], |S|=k\right\rbrace.$$
The $(n,k)$-th Boolean product polynomial can alternatively be written as $e_{\binom{n}{k}}(\kboolean{k})$.
%This interpretation will come handy later.

The most important linear basis of $\Lambda_n$ is given by the Schur functions $s_{\lambda}(X)$ for all partitions $\lambda$. Consider $T\in \ssyt{\lambda,n}$ and let $\cont{T}=(\alpha_1,\ldots,\alpha_n)$ be the ordered sequence of nonnegative integers where $\alpha_i$ is the number of instances of $i$ in $T$, for $1\leq i\leq n$.
Let $X^{\cont{T}}\coloneqq \prod_{i=1}^n x_i^{\alpha_i}$.
The Schur function $s_{\lambda}(X)$ is defined as follows.
\begin{align}
s_{\lambda}(X)=\sum_{T\in \ssyt{\lambda,n}} X^{\cont{T}}.
\end{align}
Since we also work with alphabets other than $X$, we remark here that
to define $s_{\lambda}(Y)$ for any finite alphabet $Y$, the sole
change required, other than changing $X$ to $Y$,  is to replace $n$ by the cardinality of $Y$ throughout. 

%Given a tableaux

\begin{example}\label{ex:(3,2)-boolean sum}
If $n=3$ and $k=2$, the alphabet $\kboolean{2}= \{x_1+x_2,x_1+x_3,x_2+x_3\}$.  We have
%It is easily verified that
\begin{align*}
e_1(\kboolean{2})=&(x_1+x_2)+(x_1+x_3)+(x_2+x_3)= 2s_{(1)}(x_1,x_2,x_3),\\
e_2(\kboolean{2})= &(x_1+x_2)(x_1+x_3)+(x_1+x_2)(x_2+x_3)+(x_1+x_3)(x_2+x_3)\\ =&2s_{(11)}(x_1,x_2,x_3)+s_{(2)}(x_1,x_2,x_3),\\
e_3(\kboolean{2})= &(x_1+x_2)(x_1+x_3)(x_2+x_3)=s_{(21)}(x_1,x_2,x_3).
\end{align*}
As a more involved example, consider $n=5$, $k=3$ and $X=\{x_1,\ldots,x_5\}$.
The reader may verify that $e_{10}(X^{(3)})$ equals the expression below, where the commas and parenthesis in our notation for partitions and the alphabet $X$ have all been omitted:
\begin{align*}
 &6s_{32221} + 9s_{3 3 2 1 1} + 3s_{3 3 2 2} + 3s_{3 3 3 1} + 9s_{4 2 2 1 1} + 3s_{4 2 2 2} + 6s_{4 3 1 1 1} + 9s_{4 3 2 1} + 3s_{4 3 3} + 3s_{4 4 1 1} \\ &+ 3s_{4 4 2} + 4s_{5 2 1 1 1} + 4s_{5 2 2 1} + 4s_{5 3 1 1} + 4s_{5 3 2} + 2s_{5 4 1} + s_{6 1 1 1 1} + s_{6 2 1 1} + s_{6 2 2} + s_{6 3 1}.
\end{align*}
\end{example}
Example~\ref{ex:(3,2)-boolean sum} suggests that the $e_{p}(\kboolean{k})$ expand positively in terms of Schur functions.
This is indeed true and to establish this fact, we need a geometric perspective on obtaining the alphabet $\kboolean{k}$ starting from $X$.

\subsection{Schur functors and Chern classes of vector bundles}
%\begin{conjecture}[Taylor-Tewari]\label{conj: e being s-positive}
%The symmetric polynomials $e_{i}(\binom{\x}{k})$ are all Schur-positive.
%\end{conjecture}
We briefly discuss  some representation theory of the general linear group $\mathrm{GL}_n(\mathbb{C})$ and the symmetric group  $\mathfrak{S}_n$. 
The reader is referred to \cite[Lecture 6]{FultonHarris91} for more details.
Consider a  vector space $V$ of dimension $n$ over $\mathbb{C}$.
 We denote the irreducible polynomial representation of $\mathrm{GL}_{n}(\mathbb{C})$ corresponding to $\lambda\vdash m$ by $\schurfunc{\lambda}(V)$, obtained by acting with the Young symmetrizer $c_{\lambda}$ on $V^{\otimes m}$.  We assume here that $\ell(\lambda)\leq n$. The association $V\mapsto \schurfunc{\lambda}(V)$ is a functor in the category of finite dimensional vector spaces and is called the \emph{Schur functor}.
In particular, $\schurfunc{(1^k)}(V)$ corresponds to the exterior power $\Exterior^{k}V$, whereas $\schurfunc{(k)}(V)$ corresponds to the symmetric power $\mathrm{Sym}^k V$.
The connection to the ring of symmetric polynomials is made explicit by the \emph{character} map  $\mathsf{Ch}$ defined by 
\[
\mathsf{Ch}(\schurfunc{\lambda}(V))=s_{\lambda}(X).
\]
Just as partitions index the irreducible polynomial representations of $\mathrm{GL}_n(\mathbb{C})$, they index the irreducible representations of  $\mathfrak{S}_n$.
The link to Schur polynomials is made manifest by the map $\mathsf{Frob}$, referred to as the \emph{Frobenius characteristic}, that sends the irreducible representation of $\mathfrak{S}_n$ indexed by $\lambda\vdash n$ to $s_{\lambda}(X)$.

We turn our attention to Chern classes of vector bundles over a smooth projective variety $V$. The reader is referred to \cite{Fulton84} for further details. We  merely collect facts that  allow us to cast the question of the Schur-positivity of the Boolean product polynomials as one involving Chern roots.
Let $\mathcal{E}$ be a vector bundle of rank $r$ over  $V$. The \emph{total Chern class} $c(\mathcal{E})$ is the sum of the individual Chern classes
\[
c(\mathcal{E})=1+c_1(\mathcal{E})+\cdots +c_r(\mathcal{E}).
\]
Note that $c_i(\mathcal{E})=0$ for all $i>r$ \cite[Theorem 3.2a]{Fulton84}. 
If one assumes temporarily that $\mathcal{E}$ is the direct sum of line bundles $\mathcal{L}_1,\ldots, \mathcal{L}_r$, then the Whitney-sum property \cite[Theorem 3.2e]{Fulton84} implies that 
\[
c(\mathcal{E})=\prod_{i=1}^{r}(1+c_1(\mathcal{L}_i)).
\]
If $E$ is not a direct sum, the splitting principle \cite[Remark 3.2.3]{Fulton84} says that by constructing an appropriate filtration of $\mathcal{E}$ where the successive quotients are line bundles, one may still factor the total Chern class of $\mathcal{E}$ formally as 
$
c(\mathcal{E})=\prod_{i=1}^{r}(1+\alpha_i).
$
The $\alpha_i$ for $1\leq i\leq r$ are said to be the \emph{Chern roots} of $\mathcal{E}$.
We treat  Chern roots as formal variables. 
The observation \cite[Remark 3.2.3c]{Fulton84} that is key for us is that the Chern roots of $\schurfunc{(1^k)}(\mathcal{E})=\Exterior^{k}\mathcal{E}$ for any positive integer $1\leq k\leq r$ are given by
\[
\left\lbrace\sum_{i\in S}\alpha_i: S\subseteq [r], |S|=k\right\rbrace.
\]
This should remind the reader of the construction of the alphabet $X^{(k)}$  from $X$.

% We return to our goal of establishing Schur-positivity of all Boolean product polynomials.

From this point onwards, fix a complex vector bundle $\mathcal{E}$ of
rank $n$.
% Assume further that the Chern roots of $\mathcal{E}$ are
% given by the variables in $X=\{x_1,\ldots, x_n\}$.  Finally,
Given a positive integer $k$, let $\delta_k$ be the partition of
staircase shape $(k,k-1,\ldots,1)$.  We have the following influential 
theorem due to Lascoux.
% To motivate our approach as well as to emphasize  the combinatorial aspects, we begin with a theorem of Lascoux \cite{Lascoux78}.
\begin{theorem}\cite{Lascoux78}\label{thm:Lascoux}
The total Chern class of $\Exterior^{2}\mathcal{E}$ and
$\mathrm{Sym}^{2}\mathcal{E}$ is Schur-positive in terms of the Chern
roots $x_1,\ldots, x_n$ of $\mathcal{E}$.  Specifically, there exist 
integers $d_{\lambda,\mu}\geq 0$  for $\mu \subseteq \lambda$
such that
\begin{align*}
&c(\Exterior^2\mathcal{E})=\prod_{1\leq i <j\leq n}(1+x_i+x_j)=2^{-\binom{n}{2}}\sum_{\mu\subseteq \delta_{n-1}}d_{\delta_{n-1},\mu}2^{|\mu|}s_{\mu}(X),\\
&c(\mathrm{Sym}^2\mathcal{E})=\prod_{1\leq i \leq j\leq n}(1+x_i+x_j)=2^{-\binom{n}{2}}\sum_{\mu\subseteq \delta_n}d_{\delta_n,\mu}2^{|\mu|}s_{\mu}(X).
\end{align*}
\end{theorem}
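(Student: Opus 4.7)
The plan is to use the splitting principle to translate the Chern class statement into a symmetric function identity in the Chern roots $x_{1},\ldots,x_{n}$ of $\mathcal{E}$, and then to exhibit Schur-positivity via a Vandermonde substitution. By the splitting principle, I may assume $\mathcal{E}=\bigoplus_{i=1}^{n}\mathcal{L}_{i}$ with each $\mathcal{L}_{i}$ a line bundle of Chern root $x_{i}$. The observation recalled in the excerpt on the Chern roots of $\Exterior^{2}\mathcal{E}$, together with the analogous fact that the Chern roots of $\mathrm{Sym}^{2}\mathcal{E}$ are $\{x_{i}+x_{j}:1\leq i\leq j\leq n\}$, yields the two product formulas for the total Chern classes appearing in the theorem.

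The key algebraic step is the identity
\[
(1+x_{i}+x_{j})(x_{i}-x_{j})=z_{i}-z_{j},\qquad z_{i}\coloneqq x_{i}(1+x_{i}),
\]
verified by direct expansion. Taking the product over pairs $i<j$ gives
\[
\Bigl(\prod_{1\leq i<j\leq n}(1+x_{i}+x_{j})\Bigr)\cdot a_{\delta_{n-1}}(X)=a_{\delta_{n-1}}(Z),
\]
where $a_{\lambda}(Y)\coloneqq\det(y_{i}^{\lambda_{j}+n-j})_{i,j}$ is the Jacobi alternant and $Z=(z_{1},\ldots,z_{n})$. Since $s_{\mu}(X)=a_{\mu+\delta_{n-1}}(X)/a_{\delta_{n-1}}(X)$, the Schur-positivity of $c(\Exterior^{2}\mathcal{E})$ reduces to expanding $a_{\delta_{n-1}}(Z)$ as a nonnegative integer combination of the alternants $\{a_{\mu+\delta_{n-1}}(X):\mu\subseteq\delta_{n-1}\}$, up to the overall factor $2^{-\binom{n}{2}}$ of the statement.

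To extract this expansion, I would write $z_{i}^{n-j}=x_{i}^{n-j}(1+x_{i})^{n-j}=\sum_{k=0}^{n-j}\binom{n-j}{k}x_{i}^{n-j+k}$ and apply multilinearity of the determinant in each row, presenting $a_{\delta_{n-1}}(Z)$ as a signed sum of alternants $a_{\nu}(X)$ with binomial-coefficient weights. After sorting the exponents and collecting signs, each net coefficient of $a_{\mu+\delta_{n-1}}(X)$ for $\mu\subseteq\delta_{n-1}$ emerges as a sum of products of binomial coefficients, which we identify with $2^{|\mu|}d_{\delta_{n-1},\mu}$; the global $2^{-\binom{n}{2}}$ bookkeeps the powers of $2$ in the binomial weights. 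For the symmetric square case, an analogous strategy applies on the augmented alphabet $X\cup\{x_{0}\}$ with a suitable choice of $x_{0}$, using the larger staircase $\delta_{n}$ to absorb the additional diagonal factor $\prod_{i}(1+2x_{i})=c(\mathrm{Sym}^{2}\mathcal{E})/c(\Exterior^{2}\mathcal{E})$.

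The main obstacle is controlling the sign pattern in the multilinear expansion: many of the alternants $a_{\nu}(X)$ that appear have opposing signs, and the claim is that after reindexing by the sorted exponent sequence, the net coefficient on each $a_{\mu+\delta_{n-1}}(X)$ is a genuinely nonnegative integer. Verifying this cancellation-free form---either via a sign-reversing involution on the set of multilinear choices, or by identifying $d_{\delta_{n-1},\mu}$ with an explicit combinatorial count such as a weighted enumeration of semistandard tableaux or of lattice paths tracking the binomial contributions---is the essential content of Lascoux's original argument, and the analogous task for the staircase $\delta_{n}$ controls the symmetric square case.
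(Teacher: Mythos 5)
First, note that the paper does not actually prove Theorem~\ref{thm:Lascoux}: it is quoted from \cite{Lascoux78}, and the surrounding discussion only records that the coefficients $d_{\delta_{n-1},\mu}$ are binomial determinants whose nonnegativity Lascoux obtained by geometric means and which Gessel--Viennot \cite{Gessel-Viennot85} later explained as counts of nonintersecting lattice paths. Your reduction is the classical first step toward that result: the identity $(1+x_i+x_j)(x_i-x_j)=z_i-z_j$ with $z_i=x_i(1+x_i)$ is correct, and writing $a_{\delta_{n-1}}(X)=\prod_{i<j}(x_i-x_j)$ for the Vandermonde alternant it does give $c(\Exterior^2\mathcal{E})\,a_{\delta_{n-1}}(X)=a_{\delta_{n-1}}(Z)$, so that expanding $a_{\delta_{n-1}}(Z)$ (column by column, incidentally, not row by row, since the binomial weights $\binom{n-j}{k}$ depend on the column index) expresses the coefficient of $s_{\mu}(X)$ as a signed sum of products of binomial coefficients, i.e.\ as a determinant such as $\det\bigl(\binom{n-j}{\mu_i-i+j}\bigr)$. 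But this is exactly where the theorem begins, and it is where you stop: the cancellation-free, nonnegative evaluation of these determinants is the entire content of the statement, and you explicitly defer it to ``Lascoux's original argument'' or to an unspecified sign-reversing involution. Without either the geometric positivity input or the lattice-path interpretation, nothing has been proved. Moreover, the identification of your coefficients with $2^{|\mu|-\binom{n}{2}}d_{\delta_{n-1},\mu}$ is asserted rather than derived; the paper even remarks that no combinatorial proof is known that $2^{\binom{n}{2}}$ divides $d_{\delta_{n-1},\mu}2^{|\mu|}$, so this ``bookkeeping of powers of $2$'' is not a routine step.

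The treatment of $\mathrm{Sym}^2\mathcal{E}$ is also not viable as written. The factor beyond $c(\Exterior^2\mathcal{E})$ is $\prod_{i=1}^{n}(1+2x_i)$, and there is no single value of an augmented variable $x_0$ for which $1+x_0+x_i=1+2x_i$ simultaneously for all $i$; thus passing to the alphabet $X\cup\{x_0\}$ with the staircase $\delta_n$ does not ``absorb the diagonal.'' One would instead have to expand $\prod_{i}(1+2x_i)\cdot a_{\delta_{n-1}}(Z)$ (or argue directly as Lascoux does), and the same unresolved positivity question, now for $d_{\delta_n,\mu}$, reappears. In short: your alternant reduction is a sound and standard preliminary, but the proposal leaves the essential nonnegativity unproved and the symmetric-square case incorrectly reduced.
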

The $d_{\lambda,\mu}$ appearing in Theorem~\ref{thm:Lascoux} are defined as follows: pad  $\lambda$ and $\mu$ with $0$s so that the resulting sequences have length $n$ each. Say we obtain $(\lambda_1,\ldots,\lambda_n)$ and $(\mu_1,\ldots, \mu_n)$ from $\lambda$ and $\mu$ respectively.
Then, assuming $\mu\subseteq \lambda$, 
\[
d_{\lambda,\mu}=\det\left( \binom{\lambda_i+n-i}{\mu_j+n-j}\right)_{1\leq i,j\leq n}.
\]
Determinants such as the one above are called \emph{binomial
  determinants}.  It is not immediate that these determinants are
positive.  Lascoux \cite{Lascoux78} appeals to geometric
considerations to establish positivity.  Establishing the positivity
combinatorially is the primary motivation of the seminal work of
Gessel-Viennot \cite{Gessel-Viennot85} who identify $d_{\lambda,\mu}$
as counting certain non-intersecting lattice paths in the
plane. % connecting points whose coordinates are determined by $\lambda$ and $\mu$.
This combinatorial interpretation implies that the coefficients in the
expansion in Theorem~\ref{thm:Lascoux} are positive rational numbers.
To prove integrality, observe that the products yielding
$c(\Exterior^{2}\mathcal{E})$ and $c(\mathrm{Sym}^2\mathcal{E})$
expand integrally in the basis of monomial symmetric polynomials.  The
inverse of the Kostka matrix, whose entries are integral, allows us to
obtain an integral expansion in terms of Schur polynomials.  To the
best of our knowledge, there is no known combinatorial proof
establishing that $2^{\binom{n}{2}}$ divides
$d_{\delta_{n-1},\mu}2^{|\mu|}$.

Given partitions $\lambda$ and $\mu$ not necessarily comparable by containment, denote by $s_{\lambda}(\schurfunc{\mu}(\mathcal{E}))$ the Schur polynomial $s_{\lambda}$ evaluated at the alphabet comprising the Chern roots of $\schurfunc{\mu}(\mathcal{E})$.
\begin{example}\label{ex:Pragacz's function}
Let $n=3$ and $X=\{x_1,x_2,x_3\}$ consist of the Chern roots of some vector bundle $\mathcal{E}$ of rank $3$. Then we have  
\begin{align*}
s_{(21)}(\schurfunc{(1^2)}(\mathcal{E}))=s_{(21)}(x_1+x_2,x_1+x_3,x_2+x_3)
= 2s_{(3)}(X)+5s_{(21)}(X)+4s_{(111)}(X).
\end{align*}
\end{example}

\begin{remark}\label{rem:Not plethysm}
The reader should not confuse the earlier operation of substituting the alphabet corresponding to the Chern roots of $\schurfunc{\mu}(\mathcal{E})$ into $s_{\lambda}$ for plethysm, which corresponds to taking the character of $\schurfunc{\lambda}(\schurfunc{\mu}(\mathcal{E}))$.
\end{remark}
%In the aforementioned setting
%\begin{itemize}
%\item Conjecture \ref{conj: e being s-positive} amounts to saying that $c(\Lambda^{k}E)$ is Schur-positive.
%\item Conjecture \ref{conj: Billey-Billera} would follow from considering the product of top Chern classes (that is, the highest degree terms in the polynomial above) of $\Lambda^{k}E$ for all $1\leq k\leq n$.  Interpreted differently, this corresponds to taking the top Chern class of the Whitney sum of the vectors bundles $\Lambda^{1}E$ through $\Lambda^nE$ (also known as the Whitney product). The resulting vector bundle has rank $\sum_{k=1}^n \binom{n}{k}=2^n-1$.
%\end{itemize}

We recall a theorem due to Pragacz which generalizes Lascoux's result above.
The gist of the statement is also present in \cite[Page
34]{Pragacz04}.
\begin{theorem}\label{thm:Pragacz} \cite[Corollary 7.2]{Pragacz96}
 Let $\mathcal{E}_1, \ldots, \mathcal{E}_k$ be vector bundles, and let $Y_1,\ldots, Y_k$ be the alphabets consisting of their Chern roots respectively. For partitions $\lambda, \mu^{(1)},\ldots, \mu^{(k)}$, there exists nonnegative integers $c_{(\nu^{(1)},\ldots,\nu^{(k)})}^{\lambda, (\mu^{(1)},\ldots,\mu^{(k)})}$ such that 
 \[
 s_{\lambda}(\schurfunc{\mu^{(1)}}(\mathcal{E}_1)\otimes \cdots \otimes \schurfunc{\mu^{(k)}}(\mathcal{E}_k))=\sum_{\nu_1,\ldots, \nu_k} c_{(\nu^{(1)},\ldots,\nu^{(k)})}^{\lambda,(\mu^{(1)},\ldots,\mu^{(k)})} s_{\nu_1}(Y_1)\cdots s_{\nu_k}(Y_k).
 \]
% where the  $C_{(\mu_1,\ldots,\mu_k),(\nu_1,\ldots,\nu_k)}^{\lambda}$ are nonnegative integers.
\end{theorem}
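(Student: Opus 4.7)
My plan is to reduce the theorem to the standard fact that polynomial representations of a product of general linear groups are completely reducible with irreducibles given by external tensor products of Schur functors. First, using the splitting principle \cite[Remark 3.2.3]{Fulton84}, I would treat each $\mathcal{E}_i$ as a formal direct sum of line bundles with Chern roots $Y_i = \{y_{i,1},\ldots,y_{i,n_i}\}$. Under this reduction, the Chern roots of $\schurfunc{\mu^{(i)}}(\mathcal{E}_i)$ become the weights of the irreducible $\mathrm{GL}(V_i)$-module $\schurfunc{\mu^{(i)}}(V_i)$, where $\dim V_i = n_i$, indexed by semistandard Young tableaux of shape $\mu^{(i)}$ with entries in $[n_i]$. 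The Chern roots of the tensor product $\schurfunc{\mu^{(1)}}(\mathcal{E}_1)\otimes\cdots\otimes\schurfunc{\mu^{(k)}}(\mathcal{E}_k)$ are then the pairwise sums of one weight from each factor, which are exactly the weights of the corresponding external tensor product as a representation of $G = \mathrm{GL}(V_1)\times\cdots\times\mathrm{GL}(V_k)$.

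The second step is to interpret the evaluation of $s_{\lambda}$ at these Chern roots through the character map $\mathsf{Ch}$: for any formal vector bundle $\mathcal{F}$, the polynomial $s_{\lambda}$ evaluated at the alphabet of Chern roots of $\mathcal{F}$ equals the $\mathrm{GL}$-character of $\schurfunc{\lambda}(\mathcal{F})$. Applied to $\mathcal{F} = \schurfunc{\mu^{(1)}}(\mathcal{E}_1)\otimes\cdots\otimes\schurfunc{\mu^{(k)}}(\mathcal{E}_k)$, this identifies the left-hand side of the asserted identity with the character, in the joint variables $Y_1\sqcup\cdots\sqcup Y_k$, of the polynomial $G$-representation
\[
W \;=\; \schurfunc{\lambda}\bigl(\schurfunc{\mu^{(1)}}(V_1)\otimes\cdots\otimes\schurfunc{\mu^{(k)}}(V_k)\bigr).
\]

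Finally, complete reducibility of polynomial $G$-representations, together with the classification of their irreducibles as external products $\schurfunc{\nu^{(1)}}(V_1)\otimes\cdots\otimes\schurfunc{\nu^{(k)}}(V_k)$, yields a decomposition of $W$ with multiplicities $c^{\lambda,(\mu^{(1)},\ldots,\mu^{(k)})}_{(\nu^{(1)},\ldots,\nu^{(k)})}$ that are nonnegative integers, since they are dimensions of isotypic components. Taking characters and using that the $\mathrm{GL}(V_i)$-character of $\schurfunc{\nu^{(i)}}(V_i)$ in $Y_i$ is $s_{\nu^{(i)}}(Y_i)$ then yields the required expansion. The main obstacle in carrying this out rigorously is the bookkeeping required by the splitting-principle reduction: one must verify that both sides of the identity are universal polynomial expressions in the Chern roots, so that the formal split case implies the statement for arbitrary vector bundles. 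Once this identification between the Chern-root alphabet of $W$ and the weights of the $G$-module structure is in place, the positivity is immediate.
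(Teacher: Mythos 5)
There is a genuine gap, and it sits exactly at your second step. Evaluating $s_{\lambda}$ at the alphabet of Chern roots of $\mathcal{F}=\schurfunc{\mu^{(1)}}(\mathcal{E}_1)\otimes\cdots\otimes\schurfunc{\mu^{(k)}}(\mathcal{E}_k)$ is \emph{not} the $\mathrm{GL}$-character of $W=\schurfunc{\lambda}\bigl(\schurfunc{\mu^{(1)}}(V_1)\otimes\cdots\otimes\schurfunc{\mu^{(k)}}(V_k)\bigr)$; that character is the plethysm, and the paper's Remark~\ref{rem:Not plethysm} warns precisely against this identification. The point is that Chern roots are additive avatars of weights: the Chern root attached to a weight $\alpha$ of $\schurfunc{\mu}(V)$ is the linear form $\sum_i \alpha_i y_i$, which is then substituted as a \emph{value} of a variable of $s_{\lambda}$, whereas the character substitutes the monomial $\prod_i y_i^{\alpha_i}$. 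The two operations do not even agree in degree: $s_{\lambda}$ at the Chern roots is homogeneous of degree $|\lambda|$ in the $y$'s, while the character of $W$ has degree $|\lambda|\cdot\sum_i|\mu^{(i)}|$. Concretely, in Example~\ref{ex:Pragacz's function}, $s_{(21)}(\schurfunc{(1^2)}(\mathcal{E}))=s_{(21)}(x_1+x_2,x_1+x_3,x_2+x_3)$ is cubic in $X$, whereas the character of $\schurfunc{(21)}(\Exterior^2 V)$ for $\dim V=3$ has degree $6$. So your complete-reducibility argument proves Schur-positivity of plethysms (and of characters of external tensor products), which is a different and much easier statement; it says nothing about the expansion asserted in Theorem~\ref{thm:Pragacz}.

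This is not a bookkeeping issue with the splitting principle (that reduction is fine and standard); the additive substitution simply is not the character of any evident $\mathrm{GL}(V_1)\times\cdots\times\mathrm{GL}(V_k)$-module, which is why its positivity is deep. The paper does not reprove the statement: it cites Pragacz, whose argument rests on the Fulton--Lazarsfeld numerical positivity theorem for ample vector bundles (itself relying on the Hard Lefschetz theorem), not on complete reducibility. Indeed, the authors note after Theorem~\ref{thm:Billey-Billera} that no combinatorial proof is known even for the positivity of $e_p(X^{(2)})$ with $1<p<\binom{n}{2}$, a special case your argument would have dispatched formally if it were valid. To repair your approach you would need a genuinely different mechanism (geometric positivity, or an as-yet-unknown combinatorial/representation-theoretic construction realizing these coefficients as multiplicities).
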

\noindent Pragacz's proof of Theorem~\ref{thm:Pragacz} relies on deep work of
Fulton-Lazarsfeld \cite{Fulton-Lazarsfeld83} in the context of
numerical positivity. The Hard Lefschetz theorem is a key component in
the aforementioned work.

\section{Schur-positivity using Chern roots}\label{s:main}

In this section, we establish the Schur-positivity of $B_{n,k}(X)$ and
$B_n(X)$.    Additional consequences are then described below.

\begin{proof}[Proof of Theorem~\ref{thm:Billey-Billera}]
Let $\mathcal{E}$ be a complex vector bundle of rank $n$.  Observe
that $c(\Exterior^{2}\mathcal{E})=\sum_p e_p(X^{(2)})$ provided $X$ is
the alphabet of Chern roots of $\mathcal{E}$.  On comparing
homogeneous summands on the right hand side of the preceding equality
with those in Theorem~\ref{thm:Lascoux}, we see that Lascoux's result
yields the Schur-positivity of $e_{p}(X^{(2)})$ for each $p\geq 0$. In
particular, $B_{n,2}(X)$ is Schur-positive.  To establish positivity in the
general case, we follow the route laid out by Lascoux.

For a positive integer $k$, recall from Section~\ref{s:notation} that
the Chern roots of
$\schurfunc{(1^k)}(\mathcal{E})=\Exterior^{k}\mathcal{E}$ are given by
the elements in the alphabet $X^{(k)}$.  We have
\[
c(\Exterior^{k}\mathcal{E})=\prod_{S\subseteq [n], |S|=k} \left(1+\sum_{i\in S}x_i\right)=\sum_{p\geq 0}e_{p}(X^{(k)}).
\]
We will show that each $e_{p}(X^{(k)})$ is Schur-positive in terms of the Chern roots of $\mathcal{E}$.
  
From Theorem~\ref{thm:Pragacz}, we infer
that  the structure coefficients $c^{\lambda,\mu}_{\nu}$  in the
following expansion are all nonnegative, 
\begin{align}\label{eqn: Pragacz}
s_{\lambda}(\mathcal{S}^{\mu}(\mathcal{E}))=\displaystyle\sum_{\nu}c^{\lambda,\mu}_{\nu}s_{\nu}(X).
\end{align}
In the case where $\mu=(1^k)$ for some positive integer $k$ and
$\lambda=(1^p)$ for some nonnegative integer
$0\leq p\leq \binom{n}{k}$, the left hand side of \eqref{eqn: Pragacz}
equals
$e_p(X^{(k)})$. This establishes the Schur-positivity of
$e_p(X^{(k)})$. 
The Schur-positivity of $B_{n,k}(X)$ is the special case $p=\binom{n}{k}$.
Finally,  the Schur-positivity of  $B_n(X)=\prod_{k=1}^{n}B_{n,k}(X)$ follows from the 
Littlewood-Richardson rule, which is an explicit positive combinatorial
rule to multiply Schur polynomials.
\end{proof}

%\begin{remark}

%Observe that on taking $\lambda$ to be the partition
  %$(1^m)$ in the preceding argument, we infer the Schur-positivity of
  %$e_m(\binom{\x}{k})$ for all $1\leq m\leq \binom{n}{k}$. In
  %particular, the total Chern class $c(\Exterior^{k}\mathcal{E})$ is
  %Schur-positive.
%\end{remark}  
  %as it equals $\displaystyle\sum_{0\leq m\leq   % \binom{n}{k}}e_m(X^{(k)})$.

\begin{remark}
One can show $B_{n,2}(X)=e_{\binom{n}{2}}(X^{(2)})=s_{\delta_{n-1}}(X)$ as in \cite[\S 3, Example 7]{Macdonald95}.
However, we do not know a combinatorial proof for the positivity of $e_p(X^{(2)})$ for $1<p<\binom{n}{2}$, or of $e_{p}(X^{(k)})$ for higher $k$ in general.
\end{remark}  

%Picking $m$ to equal $\binom{n}{k}$, we obtain the Schur-positivity of the product below.
%\begin{align*}
%\prod_{\substack{I\subseteq [n]\\ |I|=k}}\left(\sum_{i\in I}x_i\right)
%\end{align*}
%Now letting $k$ vary from $1$ to $n$, we obtain the Schur-positivity of the following product
%\begin{align*}
%\prod_{1\leq k\leq n}\prod_{\substack{I\subseteq [n]\\ |I|=k}}\left(\sum_{i\in I}x_i\right).
%\end{align*}
%Since this product equals $B_{n}(\x)$, we have established Conjecture \ref{conj: Billey-Billera}.

\subsection{The $(n,n-1)$-Boolean product polynomial: A special case}
Let $q$ be an indeterminate and consider the $q$-deformation of $B_{n,n-1}(X)$ defined as 
\begin{align}\label{eqn:q-deformation}
B_{n,n-1}(X;q)\coloneqq\prod_{i=1}^{n}(h_1(X)+qx_i).
\end{align}
To motivate the above deformation, we consider the special cases $q=0$ and $q=-1$.
Observe that $B_{n,n-1}(X;0)=h_{(1^n)}(X)$.
Let $V$ be an $n$-dimensional vector space over $\mathbb{C}$. If $\rho$ is the polynomial representation of $\mathrm{GL}_n(\mathbb{C})$ obtained by its action on $V^{\otimes n}$, then its character $\mathsf{Ch}(\rho)$ is equal to $h_{(1^n)}(X)$.
Furthermore, recall that $h_{(1^n)}(X)$ is also the Frobenius characteristic of the regular representation of $\mathfrak{S}_n$. Thus, in the $q=0$ case, we recover well-known representations.

The case $q=-1$ is more interesting.
 Clearly, $B_{n,n-1}(X;-1)$ equals $B_{n,n-1}(X)$.  On expanding  the product in \eqref{eqn:q-deformation}, we obtain
 \begin{align}
 B_{n,n-1}(X)=\sum_{j=0}^{n}(-1)^{j}e_{j}(X)h_{(1^{n-j})}(X).
 \end{align}
 Comparing the expression on the right hand side with the equality in \cite[Theorem 8.1]{GesselReutenauer93}, we conclude that $B_{n,n-1}(X)$ equals the  symmetric function denoted by $D_n$ therein.
 The symmetric function $D_n$ was introduced by Desarmenien and Wachs \cite{DesarmenienWachs88} in the context of descents sets of derangements. 
Given the expansion of $D_n$ in the basis of fundamental quasisymmetric functions, we obtain the following result. %\textcolor{blue}{VVT: I couldn't really find the following statement in Desarmenien-Wachs' work. But it is indeed easy to obtain using RSK. An explicit statement is present in Reiner-Webb's work (who don't cite it either). I have skipped it because we have enough references already.}
\begin{theorem}
% For $n\geq 2$, we have the following explicit Schur expansion 
%  \begin{align*}
%  B_{n,n-1}(X)=\sum_{\lambda\vdash n}|\text{SYT }  T \text{ of shape
%    }\lambda \text{ with smallest ascent even}| s_{\lambda}(X).
   For $n\geq 2$, we have the Schur-positive expansion $\displaystyle 
   B_{n,n-1}(X)=\sum_{\lambda\vdash n}a_\lambda s_{\lambda}(X) $ where
   $a_\lambda$ is the number of $T\in SYT(\lambda)$ with smallest
   ascent given by an even number.
 \end{theorem}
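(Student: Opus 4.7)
The plan is to pass from a fundamental quasisymmetric expansion of $D_n = B_{n,n-1}(X)$ to its Schur expansion by means of the RSK correspondence. Starting from the identity $D_n(X) = \sum_{j=0}^n (-1)^j e_j(X) h_{1^{n-j}}(X)$ already recorded in the excerpt, standard manipulations with fundamental quasisymmetric functions (as carried out in \cite[\S 8]{GesselReutenauer93}) yield
\[
D_n(X) = \sum_{\sigma \in \mathcal{D}_n} F_{D(\sigma^{-1}), n}(X),
\]
where $\mathcal{D}_n$ denotes the set of derangements in $\mathfrak{S}_n$ and $F_{S, n}$ denotes Gessel's fundamental quasisymmetric function indexed by $S \subseteq [n-1]$. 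I would then invoke the D\'esarm\'enien--Wachs bijection \cite{DesarmenienWachs88} to rewrite this as a sum over permutations with smallest ascent even. Defining the smallest ascent of a permutation $\sigma$ by $a(\sigma) := \min([n-1] \setminus D(\sigma))$, with the convention $a(\sigma) := n$ if $D(\sigma) = [n-1]$, the D\'esarm\'enien--Wachs result implies that the multiset $\{D(\sigma^{-1}) : \sigma \in \mathcal{D}_n\}$ coincides with $\{D(\sigma^{-1}) : \sigma \in \mathfrak{S}_n,\ a(\sigma) \text{ even}\}$, so
\[
D_n(X) = \sum_{\substack{\sigma \in \mathfrak{S}_n \\ a(\sigma) \text{ even}}} F_{D(\sigma^{-1}), n}(X).
\]

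To extract the Schur expansion I would then apply RSK $\sigma \leftrightarrow (P(\sigma), Q(\sigma))$, with $P, Q \in \mathrm{SYT}(\lambda)$ for some common shape $\lambda$. Using the classical identities $D(\sigma) = D(Q(\sigma))$ and $D(\sigma^{-1}) = D(P(\sigma))$, the condition ``$a(\sigma)$ even'' depends only on $Q(\sigma)$, while the fundamental index $D(\sigma^{-1})$ depends only on $P(\sigma)$. Grouping the pairs $(P,Q)$ first by shape and then by $P$, and applying Gessel's identity $s_\lambda = \sum_{P \in \mathrm{SYT}(\lambda)} F_{D(P), n}$, one obtains
\[
D_n(X) = \sum_{\lambda \vdash n} \bigl|\{Q \in \mathrm{SYT}(\lambda) : a(Q) \text{ even}\}\bigr| \sum_{P \in \mathrm{SYT}(\lambda)} F_{D(P), n}(X) = \sum_{\lambda \vdash n} a_\lambda\, s_\lambda(X).
\]
Here the smallest ascent $a(Q)$ of an SYT $Q$ is defined by the same formula $\min([n-1] \setminus D(Q))$, with $a(Q) := n$ if $D(Q) = [n-1]$, so the coefficient $a_\lambda$ matches the statement of the theorem.

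The main obstacle is less conceptual than notational: one must keep careful track of the inverse appearing in $D(\sigma^{-1})$. The D\'esarm\'enien--Wachs bijection is exactly what ensures the multiset of \emph{inverse} descent sets is preserved under the first-ascent-even restriction, while the RSK separation of $D(\sigma)$ and $D(\sigma^{-1})$ into the recording and insertion tableaux is what allows the first-ascent-even condition (sitting on $\sigma$) and the fundamental index (sitting on $\sigma^{-1}$) to decouple cleanly into a Schur-positive expansion. All other steps are routine applications of identities from quasisymmetric function theory.
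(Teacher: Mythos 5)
Your proposal is correct and follows essentially the same route as the paper: identify $B_{n,n-1}(X)$ with the D\'esarm\'enien--Wachs symmetric function $D_n$ via the identity $\sum_j(-1)^je_jh_{(1^{n-j})}$ and Gessel--Reutenauer, then pass from its fundamental quasisymmetric expansion (equidistribution of derangements and desarrangements, keeping track of inverse descent sets) to the Schur expansion. The paper leaves the quasisymmetric-to-Schur step implicit, whereas you spell it out via RSK and the identity $s_\lambda=\sum_{P\in \mathrm{SYT}(\lambda)}F_{D(P),n}$; this is added detail rather than a different argument.
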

 
 \iffalse
 \begin{example}
 Let $n=4$ and $X=\{x_1,x_2,x_3,x_4\}$. Then 
 \begin{align*}
 B_{4,3}(X)=&(x_1+x_2+x_3)(x_1+x_2+x_4)(x_1+x_3+x_4)(x_2+x_3+x_4)\\
 =& s_{(1,1,1,1)}(X) + s_{(2,1,1)}(X)+ s_{(2, 2)}(X) + s_{(3, 1)}(X).
 \end{align*}
 Here are the four SYTs  with $4$ boxes each whose smallest ascent (in the shaded box) is even.
 \[
 \ytableausetup{mathmode,boxsize=0.9em}
 \begin{ytableau}
 *(blue!40)4\\3\\2\\1
 \end{ytableau}
 \hspace{10mm}
 \begin{ytableau}
 4\\*(blue!40)2\\1 &3
 \end{ytableau}
 \hspace{10mm}
 \begin{ytableau}
 *(blue!40)2 & 4\\1 &3
 \end{ytableau}
 \hspace{10mm}
 \begin{ytableau}
 *(blue!40)2\\1 & 3 & 4
 \end{ytableau}
 \hspace{10mm}
 \]
 \end{example} 
 \fi
Now consider the case where $q$ is a positive integer. We have
 \begin{align}\label{eqn:generic q}
 B_{n,n-1}(X;q)=\sum_{j=0}^{n}q^{j}e_{j}(X)h_{(1^{n-j})}(X).
 \end{align}
 From \eqref{eqn:generic q}  it is clear, for instance by the Pieri rule, that $B_{n,n-1}(X;q)$ is Schur-positive. 
 We briefly remark on how to construct (ungraded) $\mathfrak{S}_n$-modules whose Frobenius characteristic is $B_{n,n-1}(X;q)$.  
 Let $\mathbf{1}$ denote the trivial character of the Young subgroup $\mathfrak{S}_1^{j}\times \mathfrak{S}_{n-j}$ of $\mathfrak{S}_n$.
 Then the Frobenius characteristic of the induced character $\mathbf{1}\!\!\uparrow_{\mathfrak{S}_1^{j}\times \mathfrak{S}_{n-j}}^{\mathfrak{S}_n}$ is equal to $h_{n-j}(X)h_{(1^j)}(X)$.
 On taking a direct sum of $q^j$ copies of this induced character for $0\leq j\leq n$ and subsequently tensoring with the sign representation of $\mathfrak{S}_n$, we obtain a character with Frobenius characteristic $B_{n,n-1}(X;q)$.
 This construction is mildly unsatisfactory and one would ideally want a more `natural' graded representation where $q$ records the grading.

 We conclude with a curious observation when $q=1$. In this case, the dimension of a $\mathbb{C}\mathfrak{S}_n$-module with Frobenius characteristic $B_{n,n-1}(X;1)$ is equal to $\displaystyle\sum_{k=0}^n \frac{n!}{k!}$.
 By \cite[Theorem 10.4]{ArdilaRinconWilliams16}, this is also the number of positroids on
 $[n]$. In ongoing work, we are investigating a natural $\mathfrak{S}_n$-action on the distinguished 
 indexing set for positroids given by decorated permutations.
 See \cite[A000522]{oeis} for many further interpretations of this sequence of dimensions.

 % \textcolor{red}{VVT: I actually have a good guess for an action on decorated permutations; it's in fact graded and the graded character coincides with $B_{n,n-1}(X;q)$ for $2,3$? Should we leave the last line out then? It'll take me more than a day to have proofs and stuff as the action is mildly involved, but I think it'd be very interesting in a full version if we were to write one.}

\subsection{The case of two alphabets}
We investigate the case where we have two different alphabets. 
The result that follows allows for further generalization to the case of multiple alphabets.

Let $X=\{x_1,\ldots, x_n\}$ and $Y= \{y_1,\ldots, y_m\}$ be two distinct alphabets. 
Note that the cardinalities of $X$ and $Y$ can be distinct.
Assume further that $X$ and $Y$ consist of Chern roots of vector bundles $\mathcal{E}$ and $\mathcal{F}$ of ranks $n$ and $m$ respectively.
 Given nonempty subsets $S\subseteq [n]$ and $T\subseteq [m]$, define the subset sums
\begin{align*}
X_S\coloneqq\sum_{i\in S}x_i \text{ and }
Y_T\coloneqq\sum_{i\in T}y_i.
\end{align*}
Fix positive integers $j$ and $k$. Consider the following product that naturally generalizes the $(n,k)$-th Boolean product polynomial.
\begin{align*}
\mathcal{P}_{j,k}(X,Y)\coloneqq\prod_{\substack{S\subseteq [n]\\|S|=j}}\prod_{\substack{T\subseteq [m]\\|T|=k}} (X_S+Y_T).
\end{align*}
This expression is clearly symmetric in the $X$ variables and $Y$ variables. Note further that $\mathcal{P}_{j,k}(X,Y)$ is equal to $e_p(\Exterior^{j}\mathcal{E}\otimes \Exterior^{k}\mathcal{F})$ where $p=\binom{n}{j}\binom{m}{k}$. 
 Therefore, by invoking Theorem~\ref{thm:Pragacz} again, we obtain the following extension.
 \begin{theorem}\label{cor:multiple alphabets}
The bivariate polynomial $\mathcal{P}_{j,k}(X,Y)$ is Schur-positive. That is, there exist nonnegative integers $a_{\lambda\mu}$ such that 
\begin{align*}\label{eqn: dual Cauchy generalization}
\mathcal{P}_{j,k}(X,Y)
=\sum_{\lambda,\mu}a_{\lambda\mu}s_{\lambda}(X)s_{\mu}(Y).
\end{align*}
\end{theorem}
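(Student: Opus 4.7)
The plan is to reduce the statement to a direct application of Pragacz's theorem (Theorem~\ref{thm:Pragacz}), following the template used in the proof of Theorem~\ref{thm:Billey-Billera}. The key observation, already recorded in the paragraph immediately preceding the statement, is that
\[
\mathcal{P}_{j,k}(X,Y) \;=\; e_p\bigl(\Exterior^{j}\mathcal{E}\otimes \Exterior^{k}\mathcal{F}\bigr), \qquad p=\binom{n}{j}\binom{m}{k},
\]
viewed as a symmetric polynomial in the Chern roots of the tensor product bundle.

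First I would verify this identity carefully via the splitting principle. By \cite[Remark 3.2.3c]{Fulton84}, the Chern roots of $\Exterior^{j}\mathcal{E}$ are the subset sums $X_S$ with $|S|=j$, and likewise the Chern roots of $\Exterior^{k}\mathcal{F}$ are the $Y_T$ with $|T|=k$. For a tensor product of two bundles with Chern roots $\{a_i\}$ and $\{b_\ell\}$, the Chern roots of the tensor product are $\{a_i+b_\ell\}$. Hence the Chern roots of $\Exterior^{j}\mathcal{E}\otimes \Exterior^{k}\mathcal{F}$ are precisely $\{X_S+Y_T : |S|=j,\ |T|=k\}$, and their product is $\mathcal{P}_{j,k}(X,Y)$. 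Since this bundle has rank $p$, this product equals the top elementary symmetric polynomial in the Chern roots, which is the same as $s_{(1^p)}$ evaluated on those Chern roots.

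Second, rewrite the identity in the form demanded by Pragacz's theorem,
\[
\mathcal{P}_{j,k}(X,Y) \;=\; s_{(1^p)}\!\bigl(\schurfunc{(1^j)}(\mathcal{E})\otimes \schurfunc{(1^k)}(\mathcal{F})\bigr),
\]
and then invoke Theorem~\ref{thm:Pragacz} with $\mathcal{E}_1=\mathcal{E}$, $\mathcal{E}_2=\mathcal{F}$, $\mu^{(1)}=(1^j)$, $\mu^{(2)}=(1^k)$, and $\lambda=(1^p)$. The theorem produces nonnegative integers $c^{(1^p),((1^j),(1^k))}_{(\nu^{(1)},\nu^{(2)})}$ such that
\[
\mathcal{P}_{j,k}(X,Y) \;=\; \sum_{\nu^{(1)},\,\nu^{(2)}} c^{(1^p),((1^j),(1^k))}_{(\nu^{(1)},\nu^{(2)})}\, s_{\nu^{(1)}}(X)\, s_{\nu^{(2)}}(Y),
\]
which gives the desired Schur-positive expansion upon setting $a_{\lambda\mu}:=c^{(1^p),((1^j),(1^k))}_{(\lambda,\mu)}$.

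Given that Pragacz's theorem is being used as a black box, there is no deep obstacle; the only bookkeeping concern is making sure one has a common smooth projective base variety simultaneously carrying bundles $\mathcal{E}$ and $\mathcal{F}$ of the correct ranks so that the Chern-class formalism applies. This is standard: one may take any convenient smooth projective variety (for instance a high-dimensional projective space, or a product of two Grassmannians pulled back along the two projections), and define $\mathcal{E},\mathcal{F}$ as direct sums of sufficiently many distinct line bundles so that their Chern roots behave as independent formal variables. Everything else follows mechanically from Theorem~\ref{thm:Pragacz}.
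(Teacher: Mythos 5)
Your proposal is correct and follows essentially the same route as the paper: identify $\mathcal{P}_{j,k}(X,Y)$ with $e_p(\Exterior^{j}\mathcal{E}\otimes\Exterior^{k}\mathcal{F})$ for $p=\binom{n}{j}\binom{m}{k}$ and apply Theorem~\ref{thm:Pragacz} with $\lambda=(1^p)$, $\mu^{(1)}=(1^j)$, $\mu^{(2)}=(1^k)$. The extra bookkeeping you include (Chern roots of the tensor product, choice of base variety) is fine but not needed beyond what the paper already records.
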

\noindent Observe that the Schur-positivity in Theorem~\ref{cor:multiple alphabets} subsumes that in the statement of Theorem~\ref{thm:Billey-Billera} if we pick exactly one of $j$ or $k$ to equal $0$.

 \section{Further remarks}\label{s:remarks}
\begin{enumerate}
\item Recall that part of motivation for studying Boolean product polynomials was to understand the matroid $M_n$ spanned by nonzero $n$-vectors with components $0$ or $1$.  We should note here that understanding the real linear algebra of these vectors can go much deeper than knowledge of the matroid $M_{n}$, which effectively only needs to know for each $n \times n$ matrix $A$ of $0's$ and $1's$ whether $\det A$ is zero or not.  For example, to ``know'' the \emph{arithmetic matroid}  of all 0-1 vectors, one needs to know, in addition, the absolute value $|\det A|$ of each such matrix.  Now to really know the possible determinants of all 0-1 matrices would include the solution of the problem of Hadamard, \emph{i.e.}, whether there is an $n \times n$ Hadamard matrix whenever $n=4k$.  The reason for this is that for each 0-1 $n \times n$ matrix $A$, $\det A \le (n+1)^{(n+1)/2}/2^{n}$ with equality if and only if there is a Hadamard matrix of order $n+1$.

\item From the work of Gessel-Reutenauer \cite[Theorem 3.6]{GesselReutenauer93}, it  follows that $B_{n,n-1}(X)$ is also related to representations of the free Lie algebra. Let $V$ be an $n$-dimensional vector space over $\mathbb{C}$. 
Then we have 
\[
B_{n,n-1}(X)=\sum_{\substack{\lambda\vdash n\\\lambda \text{ does not have parts equaling } 1}}\mathsf{Ch}(\mathrm{Lie}_{\lambda}(V)),
\]
where $\mathrm{Lie}_{\lambda}(V)$ are certain $\mathrm{GL}(V)$-modules known as \emph{higher Lie modules} \cite{Reutenauer93}.
This suggests that there might be a link between Boolean product polynomials and representations of the free Lie algebras, and we intend to explore this in the future.

\item Observe that
$
\mathcal{P}_{1,1}(X,Y)=\prod_{i=1}^{n}\prod_{j=1}^{m}(x_i+y_j)
$
and by the dual Cauchy identity, this product is well-known to have a nice Schur function expansion.
Thus, a natural question is to find an appropriate analogue to the Robinson-Schensted insertion algorithm that allows us to establish the Schur-positivity in Corollary~\ref{cor:multiple alphabets} combinatorially.

%\item Recall that for a vector bundle $\mathcal{E}$ of rank $n$, we defined $s_{\mu}(\schurfunc{\lambda}(\mathcal{E}))$ to be the evaluation of $s_{\mu}$ at the alphabet given by the Chern roots of $\schurfunc{\lambda}(\mathcal{E})$. 
 %Is there a natural $\mathrm{GL}_n(\mathbb{C})$-module $V$ such that its character $\mathsf{Ch}(V)$ equals $s_{\mu}(\schurfunc{\lambda}(\mathcal{E}))$? Similarly, one could ask for an $\mathfrak{S}_n$-module.
% Or $e_m(\binom{\x}{k})$? Another direction involves using set-valued tableaux, and seeing if there is anything meaningful to be said about the expansion of $B_{n}(\x)$ in terms of the dual stable Grothendieck polynomials. In fact, thanks to the work of Lam-Pylyavskyy \cite{Lam-Pylyavskyy07}, the dual stable Grothendieck polynomial indexed by a partition $\lambda$, denoted by $g_{\lambda}$, has the following combinatorial expansion
%\begin{align*}
%g_{\lambda}=\sum_{\mu\subseteq \lambda}f_{\lambda}^{\mu}s_{\mu}
%\end{align*}
%where the $f_{\lambda}^{\mu}$ count \emph{elegant fillings}. It follows that $g_{\lambda}$ is not homogeneous, but has homogeneous summands of all degrees from $|\lambda|$ down to $0$. This suggests that the expansion below on the right might be interesting.
%\begin{align*}
%c(\Lambda^kE)=\prod_{\substack{S\subseteq [n]\\ |S|=k}}\left(1+(\sum_{i\in S}x_i)\right)=\sum_{m}e_m(\binom{\x}{k})=\sum_{\mu}a_{\mu} g_{\mu}
%\end{align*}
\end{enumerate}

%\acknowledgements{ We thank Patricia Hersh and Jair Taylor for enlightening discussions.  }

%% if you use biblatex then this generates the bibliography
%% if you use some other method then remove this and do it your own way

\printbibliography

%\begin{thebibliography}{00}

\end{document}